\numberwithin{equation}{section}
\numberwithin{figure}{section}
\theoremstyle{plain}
\newtheorem{thm}{\protect\theoremname}
  \theoremstyle{plain}
  \newtheorem{lem}[thm]{\protect\lemmaname}
 \theoremstyle{definition}
 \newtheorem*{defn*}{\protect\definitionname}
  \theoremstyle{plain}
  \newtheorem{prop}[thm]{\protect\propositionname}
  \providecommand{\definitionname}{Definition}
  \providecommand{\lemmaname}{Lemma}
  \providecommand{\propositionname}{Proposition}
\providecommand{\theoremname}{Theorem}
\begin{document}

\title{THE AUTOMORPHISM GROUP FOR $p$-CENTRAL $p$-GROUPS}

\author{Anitha Thillaisundaram}

\address{Magdalene College, Cambridge, CB3 0AG, United Kingdom.}

\email{anitha.t@cantab.net}

\date{24th September 2011}
\begin{abstract}
A $p$-group $G$ is $p$-central if $G^{p}\le Z(G)$, and $G$ is
$p^{2}$-abelian if $(xy)^{p^{2}}=x^{p^{2}}y^{p^{2}}$ for all $x,y\in G$.
We prove that for $G$ a finite $p^{2}$-abelian $p$-central $p$-group,
excluding certain cases, the order of $G$ divides the order of $\text{Aut}(G)$.
\end{abstract}
\maketitle

\section*{Introduction}

$\ $

The following conjecture has been the subject of much debate over
the past forty years or so.

For $G$ a group, we denote the group of\emph{ }automorphisms of $G$
by $\text{Aut}(G)$. Here $|G|$ denotes the order of the group $G$.

$\ $

\textbf{Conjecture A. }\cite{key-4} \emph{For $G$ a non-cyclic $p$-group
of order $p^{n}$ with $n\ge3$, $|G|$ divides $|\text{Aut}(G)|$. }

$\ $

Results in favour of Conjecture A have been made by Buckley \cite{key-Buckley};
Davitt \cite{key-28,key-41,key-29,key-24}; Exarchakos \cite{key-39};
Faudree \cite{key-39}; Fouladi, Jamali \& Orfi \cite{key-25}; Gasch\"{u}tz
\cite{key-40}; Gavioli \cite{key-43}; Hummel \cite{key-27}; Otto
\cite{key-29,key-24,key-30}; and Yadav \cite{key-4}. See Result
B below for further details. I apologise if I have unknowingly omitted
other references.

$\ $

Notice that each non-central element $g$ of $G$ induces a non-trivial
automorphism of $G$ via conjugation. This defines an \emph{inner
automorphism} of $G$. $\text{Inn}(G)$ denotes the subgroup of inner
automorphisms of $G$, which is normal in $\text{Aut}(G)$. The non-inner
automorphisms are called \emph{outer automorphisms. }They are elements
in $\nicefrac{\text{Aut}(G)}{\text{Inn}(G)}$ and so are defined modulo
$\text{Inn}(G)$. We denote the group of outer automorphisms of $G$
by $\text{Out}(G)$.

Certainly as $\text{Inn}(G)\cong\nicefrac{G}{Z(G)}$, we can rephrase
the question to whether or not $|Z(G)|$ divides $|\text{Out}(G)|$.

Conjecture A has been established to be true for several classes of
$p$-groups, as listed below in Result B. We note that a group $G$
is the \emph{central product} of two subgroups $H$ and $K$ if (a)
$[H,K]=1$, (b) $G=HK$, and (c) $H\cap K=Z(G)$; and a group is \emph{modular
}if each subgroup commutes with every other subgroup, i.e. for $H,K\le G$,
we have $HK=KH$.

$\ $

\textbf{Result B. }Conjecture A holds for the following finite $p$-groups:
\begin{itemize}
\item $p$-abelian $p$-groups \cite{key-28};
\item $p$-groups of class 2 \cite{key-39};
\item $p$-groups of maximal class (or coclass 1) \cite{key-30}; 
\item $p$-groups of coclass 2 \cite{key-25};
\item $p$-groups with centre of order $p$ \cite{key-40};
\item $p$-groups of order at most $p^{7}$ \cite{key-41,key-42,key-43}; 
\item modular $p$-groups \cite{key-29};
\item $p$-groups with $\nicefrac{G}{Z(G)}$ metacyclic \cite{key-24};
\item $p$-groups with $|\nicefrac{G}{Z(G)}|\le p^{4}$ \cite{key-24};
\item $G=A\times B$ where $A$ is abelian and $|B|$ divides $|\text{Aut}(B)|$
\cite{key-30}; 
\item $G$ a central product of non-trivial subgroups $H$ and $A$, where
$A$ is abelian and $|H|$ divides $|\text{Aut}(H)|$ \cite{key-27};
\item $G$ with a non-trivial normal subgroup $N$ such that $N\cap[G,G]=1$,
and $|\nicefrac{G}{N}|$ divides $|\text{Aut}(\nicefrac{G}{N})|$
\cite{key-Buckley};
\item $G$ such that $xZ(G)\subseteq x^{G}$ for all $x\in G\backslash Z(G)$,
where $x^{G}$ denotes the conjugacy class of $x$ in $G$ \cite{key-4}.
\end{itemize}
$\ $

For $n\in\mathbb{N}$, we have $G^{\left\{ n\right\} }=\left\{ x^{n}|x\in G\right\} $
and $G^{n}=\langle G^{\left\{ n\right\} }\rangle$. 

We define the \emph{centre} of $G$ as $Z(G)=\left\{ x\in G|x^{-1}gx=g\ \text{for all }g\in G\right\} $. 

We say that a $p$-group $G$ is $p$-central if $G^{p}\le Z(G)$.
We define $G$ to be $p^{2}$-abelian if for all $x,y\in G$, we have
$(xy)^{p^{2}}=x^{p^{2}}y^{p^{2}}$.

In this paper, we prove the conjecture for $p^{2}$-abelian $p$-central
$p$-groups.
\begin{thm}
\label{thm:Conj} For $p$ an odd prime, let $G$ be a non-abelian
$p^{2}$-abelian $p$-central $p$-group, with $|G|\ge p^{3}$. Suppose
that the centre $Z(G)$ of $G$ is of the form
\[
Z(G)\cong\nicefrac{\mathbb{Z}}{p^{e_{1}}\mathbb{Z}}\times\ldots\times\nicefrac{\mathbb{Z}}{p^{e_{n}}\mathbb{Z}}
\]
where $3\le e_{1}\le\ldots\le e_{n}$ and $n\ge3$. Then $|G|$ divides
$|\text{Aut}(G)|$.
\end{thm}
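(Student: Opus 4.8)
The plan is to recast the statement and then manufacture a large supply of outer automorphisms from central ones. Since $\text{Inn}(G)\cong\nicefrac{G}{Z(G)}$ and $|G|=|\nicefrac{G}{Z(G)}|\cdot|Z(G)|$, proving that $|G|$ divides $|\text{Aut}(G)|$ is equivalent to proving that $|Z(G)|$ divides $|\text{Out}(G)|$, as already noted in the introduction. First I would reduce to the purely non-abelian case: if $G=A\times B$ with $A$ abelian and $B$ having no abelian direct factor, then $B$ is again $p^{2}$-abelian and $p$-central and $Z(G)=A\times Z(B)$, so applying the result to $B$ (or, when the centre of $B$ no longer meets the hypotheses, an earlier item of Result B) together with the direct-factor item of Result B yields the claim for $G$. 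Hence I may assume that $G$ has no abelian direct factor.

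Under that assumption I would invoke the classical theorem of Adney and Yen, which for a purely non-abelian finite $p$-group gives $|\text{Aut}_{c}(G)|=|\text{Hom}(\nicefrac{G}{[G,G]},Z(G))|$, where $\text{Aut}_{c}(G)$ is the group of central automorphisms (those $\phi$ with $g^{-1}\phi(g)\in Z(G)$ for all $g$). The central automorphisms that happen to be inner are exactly those induced by the second centre $Z_{2}(G)$, so $\text{Aut}_{c}(G)\cap\text{Inn}(G)\cong\nicefrac{Z_{2}(G)}{Z(G)}$ and the image of $\text{Aut}_{c}(G)$ in $\text{Out}(G)$ has order $|\text{Hom}(\nicefrac{G}{[G,G]},Z(G))| / |\nicefrac{Z_{2}(G)}{Z(G)}|$. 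Thus it suffices to prove the single divisibility
\[
|Z(G)|\cdot\bigl|\nicefrac{Z_{2}(G)}{Z(G)}\bigr|\ \text{ divides }\ \bigl|\text{Hom}(\nicefrac{G}{[G,G]},Z(G))\bigr|.
\]

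To attack this I would bring in the two hypotheses. Being $p^{2}$-abelian makes the $p^{2}$-power map $\pi\colon x\mapsto x^{p^{2}}$ an endomorphism of $G$, and being $p$-central forces $x^{p}\in Z(G)$ and hence $\text{im}(\pi)=G^{p^{2}}\le Z(G)$; since $\pi$ kills $[G,G]$, the subgroup $G^{p^{2}}$ is simultaneously a subgroup of $Z(G)$ and a quotient of $\nicefrac{G}{[G,G]}$. Writing the invariant types of $\nicefrac{G}{[G,G]}$ and $Z(G)$ as $(d_{1},\ldots,d_{r})$ and $(e_{1},\ldots,e_{n})$, the right-hand side of the displayed divisibility has $p$-adic valuation $\sum_{i,j}\min(d_{i},e_{j})$, while $|Z(G)|$ contributes $\sum_{j}e_{j}$. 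For the left-hand factor I would bound $\nicefrac{Z_{2}(G)}{Z(G)}$ from above by embedding it, via $gZ(G)\mapsto[g,\,\cdot\,]$, into $\text{Hom}(\nicefrac{G}{Z(G)},Z(G))=\text{Hom}(\nicefrac{G}{[G,G]Z(G)},Z(G))$, which is controlled by the invariants of the proper quotient $\nicefrac{G}{[G,G]Z(G)}$ of $\nicefrac{G}{[G,G]}$. The hypotheses enter precisely here: the rank bound $n\ge3$ guarantees enough independent homomorphisms into $Z(G)$, while the exponent bound $e_{i}\ge3$ guarantees that $Z(G)^{p^{2}}$ still has full rank $n$, so that the image of $Z(G)$ inside $\nicefrac{G}{[G,G]}$ contributes roughly $\sum_{j}e_{j}$ extra to the Hom-count and absorbs the factor $|Z(G)|$.

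The main obstacle is exactly this last step: turning the above into the valuation inequality $\sum_{j}e_{j}+v_{p}\bigl(|\nicefrac{Z_{2}(G)}{Z(G)}|\bigr)\le\sum_{i,j}\min(d_{i},e_{j})$ for every admissible pair of invariant types. The delicate point is that $Z_{2}(G)$ is not directly pinned down by the hypotheses, so I would have to play the upper bound on $\nicefrac{Z_{2}(G)}{Z(G)}$ coming from the commutator pairing against the $\min(d_{i},e_{j})$ terms, using that $\nicefrac{G}{[G,G]Z(G)}$ is a strictly smaller quotient than $\nicefrac{G}{[G,G]}$ and that the discarded part carries the image of $Z(G)$. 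Verifying that these two estimates are compatible for all invariant types with $n\ge3$ and $e_{1}\ge3$---and isolating the small-rank or small-exponent configurations where compatibility fails, which are the cases excluded in the statement---is where the real work lies.
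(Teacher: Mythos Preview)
Your approach is genuinely different from the paper's, but it is also incomplete in two places. First, the reduction to the purely non-abelian case does not go through cleanly: when $G=A\times B$ and the invariants of $Z(B)$ fall below $n\ge3$ or $e_{1}\ge3$, no single item of Result~B covers an arbitrary non-abelian $p^{2}$-abelian $p$-central $B$, so you cannot simply defer to it. Second, and more seriously, you explicitly leave the key divisibility
\[
|Z(G)|\cdot\bigl|\nicefrac{Z_{2}(G)}{Z(G)}\bigr|\ \text{ divides }\ \bigl|\text{Hom}\bigl(\nicefrac{G}{[G,G]},Z(G)\bigr)\bigr|
\]
unproved. Your embedding $\nicefrac{Z_{2}}{Z}\hookrightarrow\text{Hom}(\nicefrac{G}{[G,G]Z},Z)$ only yields $|\nicefrac{Z_{2}}{Z}|\le p^{mn}$ with $m$ the rank of the elementary abelian group $\nicefrac{G}{[G,G]Z}$, and nothing in the outline shows how the $p^{2}$-abelian hypothesis converts this into the required inequality on invariant types; the vague remark that the image of $Z(G)$ in $\nicefrac{G}{[G,G]}$ ``contributes roughly $\sum_{j}e_{j}$'' is not an argument.

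The paper avoids both difficulties by \emph{constructing}, rather than counting, non-inner central automorphisms. Using the Passi--Singh--Yadav lemma for the central extension $1\to Z\to G\to\nicefrac{G}{Z}\to1$, it shows that every $\theta\in\text{Aut}(Z)$ whose Hillar--Rhea matrix is congruent to the identity modulo $p^{2}$ (in the graded sense of condition~(c) of Proposition~\ref{pro:(a),(b),(c)}) extends to some $\widetilde{\theta}\in\text{Aut}^{\nicefrac{G}{Z}}(G)$. The hypotheses enter constructively: $p$-centrality gives $t(x)^{p}\in Z$, and $p^{2}$-abelianness gives the identity $\mu(x,y)^{p^{2}}=t(x)^{p^{2}}t(y)^{p^{2}}t(xy)^{-p^{2}}$, which is exactly what is needed to manufacture the correcting cochain $\chi$. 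Since every nontrivial such $\widetilde{\theta}$ moves $Z$, it is automatically non-inner, so a direct count of the admissible $\theta$ gives $|\text{Out}(G)|_{p}\ge|Z|\cdot p^{n^{2}-3n}$, and the conditions $n\ge3$ and $e_{1}\ge3$ finish the estimate. No Adney--Yen formula, no reduction to the purely non-abelian case, and no control of $Z_{2}(G)$ are required.
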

As $p$-abelian groups are $p^{2}$-abelian, Theorem \ref{thm:Conj}
partially generalizes the fact that $p$-abelian $p$-groups satisfy
Conjecture A.

In the following, we prove Theorem \ref{thm:Conj}, using results
on extending automorphisms of subgroups (by Passi, Singh \& Yadav
\cite{key-38}) and on counting automorphisms of abelian $p$-groups
(by Hillar \& Rhea \cite{key-36}).

$\ $

This paper is an extract from my PhD thesis under the supervision
of Rachel Camina. 

\textbf{Acknowledgements.} I am very grateful to Rachel Camina for
her time and helpful comments. Also thank you to Chris Brookes and
to Gavin Armstrong for a thorough reading of this work. 

This research has been made possible through the generous support
from the Cambridge Commonwealth Trust, the Cambridge Overseas Research
Scholarship, and the Leslie Wilson Scholarship (from Magdalene College,
Cambridge).

$\ $

\section*{Proof of Theorem \ref{thm:Conj}}

$\ $

Let $G$ be a $p^{2}$-abelian $p$-central $p$-group and denote
$Z(G)$ by simply $Z$. Let $|\text{Out}(G)|_{p}$ denote the largest
power of $p$ that divides $|\text{Out}(G)|$. This corresponds to
the order of a Sylow $p$-subgroup of $\text{Out}(G)$. 

To prove the theorem, as $|\text{Inn}(G)|=|\nicefrac{G}{Z}|$, it
suffices to show that
\[
|\text{Out}(G)|_{p}\ge|Z|.
\]

Let
\[
E:1\rightarrow N\begin{array}{c}
\rightarrow\end{array}G\begin{array}{c}
\rightarrow\end{array}Q\rightarrow1
\]
be an extension of the group $N$ by the group $Q$ .

Note: if $N\le Z$, then $E$ is termed a \emph{central} extension.

$\ $

Here $\text{Aut}^{\nicefrac{G}{Z}}(G)$ is the subgroup of automorphisms
of $G$ that induce the identity on $\nicefrac{G}{Z}$. For $N$ normal
in $G$, we define $\text{Aut}_{N}(G)$ to be the subgroup of automorphisms
of $G$ that normalize $N$.

Our plan is to compute a lower bound for the size of a Sylow $p$-subgroup
of $\text{Out}(G)$. To do this, we choose to consider elements of
$\text{Aut}(Z)$ that extend to elements of $\text{Aut}^{\nicefrac{G}{Z}}(G)$.
Naturally any such extension of a non-identity automorphism of $Z$
is non-inner.

Such extendable elements of $\text{Aut}(Z)$ are determined by the
following result. In the following, $t:Q\rightarrow G$ is a left
transversal, and $\mu:Q\times Q\rightarrow N$ is defined by
\[
t(xy)\mu(x,y)=t(x)t(y).
\]
Also, $N^{Q}$ denotes the group of all maps $\psi$ from $Q$ to
$N$ such that $\psi(1)=1$.
\begin{lem}
\label{Lemma 2} \cite{key-38} Let $1\rightarrow N\rightarrow G\rightarrow Q\rightarrow1$
be a central extension. Using the notation above, if $\gamma\in\text{Aut}_{N}(G)$,
then there exists a triplet $(\theta,\phi,\chi)\in\text{Aut}(N)\times\text{Aut}(Q)\times N^{Q}$
such that for all $x,y\in Q$ and $n\in N$ the following conditions
are satisfied:

(1) $\gamma(t(x)n)=t(\phi(x))\chi(x)\theta(n)$,

(2) $\mu(\phi(x),\phi(y))\theta(\mu(x,y)^{-1})=\chi(x)^{-1}\chi(y)^{-1}\chi(xy)$.

Conversely, if $(\theta,\phi,\chi)\in\text{Aut}(N)\times\text{Aut}(Q)\times N^{Q}$
is a triplet satisfying equation (2), then $\gamma$ defined by (1)
is an automorphism of $G$ normalizing $N$. 
\end{lem}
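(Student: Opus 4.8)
The plan is to extract the triplet $(\theta,\phi,\chi)$ directly from $\gamma$ and then to read off both conditions from the defining relation of the factor set. First I would set $\theta=\gamma|_{N}$; since $\gamma$ normalises $N$, this restriction is an automorphism of $N$, giving the first component. Because $N$ is normal, $\gamma$ descends to an automorphism $\phi$ of $Q\cong\nicefrac{G}{N}$, the second component. For the third, I would compare $\gamma(t(x))$ with the transversal: as $\gamma(t(x))$ and $t(\phi(x))$ lie in the same coset of $N$, there is a unique $\chi(x)\in N$ with $\gamma(t(x))=t(\phi(x))\chi(x)$, and the normalisation $t(1)=1$ forces $\chi(1)=1$, so $\chi\in N^{Q}$. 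Equation (1) is then immediate: writing a general element as $t(x)n$ and using $\gamma(t(x)n)=\gamma(t(x))\gamma(n)$ gives $t(\phi(x))\chi(x)\theta(n)$.

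The core step is to derive the cocycle compatibility (2). I would apply $\gamma$ to the defining relation $t(x)t(y)=t(xy)\mu(x,y)$ and evaluate both sides using (1). The crucial feature is that $N\le Z$ is central, so every value $\chi(x)$, $\theta(\mu(x,y))$, and so on commutes past any transversal element. On the left this yields $t(\phi(x))t(\phi(y))\chi(x)\chi(y)=t(\phi(xy))\mu(\phi(x),\phi(y))\chi(x)\chi(y)$, using that $\phi$ is a homomorphism; on the right it yields $t(\phi(xy))\chi(xy)\theta(\mu(x,y))$. Cancelling $t(\phi(xy))$ and rearranging in the abelian group $N$ gives exactly (2).

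For the converse, I would take a triplet satisfying (2), define $\gamma$ by (1), and verify the three properties an automorphism normalising $N$ requires. Well-definedness is free, since each element of $G$ has a unique expression $t(x)n$. The homomorphism property is the substantive check: multiplying $t(x)m$ by $t(y)n$, moving the central factors past the transversal, and applying the factor-set relation reduces $\gamma(g_{1}g_{2})=\gamma(g_{1})\gamma(g_{2})$ to precisely condition (2), so the same identity that was necessary is also sufficient. Finally $\gamma$ restricts to the bijection $\theta$ on $N$ and induces the bijection $\phi$ on $Q$, whence $\gamma$ is itself bijective and normalises $N$ by construction.

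I expect the only genuine obstacle to be bookkeeping: keeping the roles of $x,y\in Q$ and $n\in N$ straight and invoking centrality at each commutation. There is no deep difficulty, since the statement is essentially the translation of the assertion that $\gamma$ respects the extension data into the language of factor sets; the content lies entirely in recognising that condition (2) is simultaneously the obstruction to $\gamma$ being a homomorphism and the compatibility forced by a given $\gamma$.
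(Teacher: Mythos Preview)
The paper does not supply its own proof of this lemma; it is quoted from \cite{key-38} (Passi, Singh \& Yadav) and used as a black box. Your sketch is correct and is exactly the standard argument one gives for such a result: extract $\theta$, $\phi$, $\chi$ from $\gamma$ in the obvious way, and obtain condition~(2) by applying $\gamma$ to the factor-set identity $t(x)t(y)=t(xy)\mu(x,y)$, with centrality of $N$ doing all the commuting. The converse is likewise the routine verification you describe.
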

$\ $ 

We take $N=Z$, $Q=\nicefrac{G}{Z}$ in the lemma above. It is clear
from equation (1) of Lemma \ref{Lemma 2} that when $\phi=1$, the
automorphism $\gamma$ induces the identity on $\nicefrac{G}{Z}$.
So we set $\phi=1$. Given a suitable $\theta\in\text{Aut}(Z)$, we
construct the required $\chi$ to satisfy: 
\begin{equation}
\mu(x,y)\theta(\mu(x,y)^{-1})=\chi(x)^{-1}\chi(y)^{-1}\chi(xy).\label{eq:Star}
\end{equation}
$\ $

In \cite{key-36}, Hillar and Rhea give a useful description of the
automorphism group of an arbitrary abelian $p$-group, and they compute
the size of this automorphism group. We sketch their results here.
The first complete characterization of the automorphism group of an
abelian group was, however, given by Ranum \cite{key-Ranum}.

We will use Hillar and Rhea's account to characterize $\text{Aut}(Z)$.
First, we set up the relevant notation and results leading to our
desired description.

We begin with an arbitrary abelian $p$-group $H_{p}$, where

\[
H_{p}\cong\nicefrac{\mathbb{Z}}{p^{e_{1}}\mathbb{Z}}\times\ldots\times\nicefrac{\mathbb{Z}}{p^{e_{n}}\mathbb{Z}}
\]
and $1\le e_{1}\le\ldots\le e_{n}$ are positive integers.

Hillar and Rhea first describe $\text{End}(H_{p})$, the endomorphism
ring of $H_{p}$, as a quotient of a matrix subring of $\mathbb{Z}^{n\times n}$.
Then, as we will see below, the units $\text{Aut}(H_{p})\subseteq\text{End}(H_{p})$
are characterized from this description. 

An element of $H_{p}$ is represented by a column vector $(\alpha_{1},\ldots,\alpha_{n})^{T}$
where $\alpha_{i}\in\nicefrac{\mathbb{Z}}{p^{e_{i}}\mathbb{Z}}$.
\begin{defn*}
(\cite{key-36}, Definition 3.1) 
\[
R_{p}=\{(a_{ij})\in\mathbb{Z}^{n\times n}:p^{e_{i}-e_{j}}|a_{ij}\text{ for all }i\text{ and }j\text{ satisfying }1\le j\le i\le n\}.
\]

\end{defn*}
From \cite{key-36}, we have that $R_{p}$ forms a ring. 

Let $\pi_{i}:\mathbb{Z}\longrightarrow\nicefrac{\mathbb{Z}}{p^{e_{i}}\mathbb{Z}}$
be defined by $x\mapsto x$ mod $p^{e_{i}}$. Let $\pi:\mathbb{Z}^{n}\longrightarrow H_{p}$
be the homomorphism given by 
\[
\pi(x_{1},\ldots,x_{n})^{T}=(\pi_{1}(x_{1}),\ldots,\pi_{n}(x_{n}))^{T}.
\]
Here is the description of $\text{End}(H_{p})$ as a quotient of the
matrix ring $R_{p}$.
\begin{thm}
\label{thm:3.3}\cite{key-36} The map $\psi:R_{p}\longrightarrow\text{End}(H_{p})$
given by
\[
\psi(A)(\alpha_{1},\ldots,\alpha_{n})^{T}=\pi(A(\alpha_{1},\ldots,\alpha_{n})^{T})
\]
is a surjective ring homomorphism.
\end{thm}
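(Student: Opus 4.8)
The plan is to verify in turn that $\psi(A)$ is a well-defined endomorphism for each $A \in R_p$, that $\psi$ respects the ring operations, and finally that $\psi$ is surjective. Throughout, for $v = (\alpha_1,\ldots,\alpha_n)^T \in H_p$ I interpret $Av$ by first choosing integer lifts $\widetilde{\alpha}_i \in \mathbb{Z}$ of the $\alpha_i$, forming $A\widetilde{v} \in \mathbb{Z}^n$, and then applying $\pi$; so the content of well-definedness is that $\pi(A\widetilde{v})$ does not depend on the chosen lifts.

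First I would establish well-definedness, which is exactly where the defining divisibility condition on $R_p$ is used. Changing a single lift $\widetilde{\alpha}_j$ to $\widetilde{\alpha}_j + p^{e_j}k$ alters the $i$-th coordinate of $A\widetilde{v}$ by $a_{ij}p^{e_j}k$, so $\pi$ is unaffected precisely when $p^{e_i} \mid a_{ij}p^{e_j}$ for every $i$, that is, when $p^{e_i - e_j} \mid a_{ij}$. For $i < j$ we have $e_i \le e_j$ and the condition holds automatically, while for $i \ge j$ it is exactly the membership condition defining $R_p$. Hence $\psi(A)$ is well-defined, and since $\pi$ is additive and matrix multiplication is additive in $v$, the map $\psi(A): H_p \to H_p$ is a group homomorphism, i.e.\ an endomorphism.

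Next I would check the ring-homomorphism axioms. Additivity $\psi(A+B) = \psi(A) + \psi(B)$ and $\psi(I_n) = \mathrm{id}$ are immediate from the formula. For multiplicativity I must show $\psi(AB) = \psi(A)\circ\psi(B)$: evaluating the right-hand side on $v$ means lifting $\psi(B)(v) = \pi(B\widetilde{v})$ to some integer vector that differs from $B\widetilde{v}$ by a vector with $p^{e_j}$-divisible $j$-th coordinates, and the well-definedness just proved guarantees that applying $\pi(A\,\cdot\,)$ to either lift gives the same answer, namely $\pi(AB\widetilde{v}) = \psi(AB)(v)$. Here I use that $R_p$ is closed under multiplication, already recorded above.

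Finally, for surjectivity I would reconstruct a preimage of an arbitrary $f \in \mathrm{End}(H_p)$ from its action on generators. Let $\mathbf{u}_j$ denote the $j$-th standard basis vector of $\mathbb{Z}^n$ and $g_j = \pi(\mathbf{u}_j)$, so that $g_j$ has order $p^{e_j}$ and the $g_j$ generate $H_p$. Choose integer representatives $a_{ij}$ of the coordinates of $f(g_j)$, giving a matrix $A = (a_{ij})$. Since $p^{e_j}g_j = 0$ forces $p^{e_j}f(g_j) = 0$, the $i$-th coordinate gives $p^{e_i} \mid p^{e_j}a_{ij}$, i.e.\ $p^{e_i - e_j} \mid a_{ij}$ for $i \ge j$; thus $A \in R_p$. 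By construction $\psi(A)(g_j) = \pi(A\mathbf{u}_j) = f(g_j)$ for every $j$, and as both maps are homomorphisms agreeing on a generating set they coincide, so $\psi(A) = f$. The main obstacle is bookkeeping the single congruence $p^{e_i - e_j} \mid a_{ij} \Leftrightarrow p^{e_i} \mid p^{e_j}a_{ij}$ consistently and tracking the inequality $i \ge j$ against the ordering $e_1 \le \cdots \le e_n$; this same equivalence drives both the well-definedness and the membership verification, so once it is set up cleanly the rest is formal.
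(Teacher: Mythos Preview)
Your argument is correct: the three verifications (well-definedness via the divisibility condition defining $R_p$, preservation of the ring operations, and surjectivity by reading off a matrix from the images of the standard generators and checking it lies in $R_p$) are exactly the natural steps, and you handle the key equivalence $p^{e_i}\mid p^{e_j}a_{ij}\Longleftrightarrow p^{e_i-e_j}\mid a_{ij}$ cleanly in both places it is needed.

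Note, however, that the paper does not supply its own proof of this statement: Theorem~\ref{thm:3.3} is quoted from Hillar and Rhea \cite{key-36} and used as a black box. Your write-up therefore fills in what the paper omits, and it matches the standard argument one would expect (and that is given in \cite{key-36}).
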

Let $K$ be the set of matrices $A=(a_{ij})\in R_{p}$ such that $p^{e_{i}}|a_{ij}$
for all $i,j$. This forms an ideal.
\begin{lem}
\label{lem:3.4}\cite{key-36} The ideal $K$, as defined above, is
the kernel of $\psi$.
\end{lem}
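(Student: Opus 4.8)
The plan is to prove the two inclusions $K\subseteq\ker\psi$ and $\ker\psi\subseteq K$ separately, working throughout with the explicit coordinate formula for $\psi$. Recall that for $A=(a_{ij})\in R_{p}$ and $\alpha=(\alpha_{1},\ldots,\alpha_{n})^{T}\in H_{p}$, the $i$-th coordinate of $\psi(A)(\alpha)$ is $\pi_{i}\big(\sum_{j=1}^{n}a_{ij}\alpha_{j}\big)$, that is, $\sum_{j}a_{ij}\alpha_{j}$ reduced modulo $p^{e_{i}}$; here one first lifts each $\alpha_{j}$ to an integer, and the defining divisibility condition of $R_{p}$ is precisely what guarantees that this coordinate does not depend on the choice of lift.

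For the inclusion $K\subseteq\ker\psi$, I would take $A\in K$, so that $p^{e_{i}}\mid a_{ij}$ for all $i,j$. Then for every $\alpha\in H_{p}$ and every index $i$, each summand $a_{ij}\alpha_{j}$ is divisible by $p^{e_{i}}$, hence $\sum_{j}a_{ij}\alpha_{j}\equiv0\pmod{p^{e_{i}}}$, so $\pi_{i}\big(\sum_{j}a_{ij}\alpha_{j}\big)=0$. As this holds in every coordinate $i$ and for every $\alpha$, we obtain $\psi(A)=0$, i.e. $A\in\ker\psi$. This direction is a direct computation.

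For the reverse inclusion $\ker\psi\subseteq K$, the key idea is to test $\psi(A)$ on the standard basis vectors. Fix indices $i,j$ and let $\mathbf{e}_{j}\in H_{p}$ be the vector with $j$-th coordinate $1$ and all other coordinates $0$. If $A\in\ker\psi$, then $\psi(A)(\mathbf{e}_{j})=0$; but its $i$-th coordinate equals $\pi_{i}(a_{ij})=a_{ij}\bmod p^{e_{i}}$, so $a_{ij}\equiv0\pmod{p^{e_{i}}}$, that is $p^{e_{i}}\mid a_{ij}$. Letting $i,j$ range over all pairs shows $A\in K$, completing the argument. Note that the condition $p^{e_{i}}\mid a_{ij}$ for all $i,j$ in particular forces $p^{e_{i}-e_{j}}\mid a_{ij}$ for $i\ge j$, which is consistent with $K\subseteq R_{p}$.

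I do not expect a genuine obstacle here: both inclusions reduce to elementary divisibility arguments, and since $\psi$ is a ring homomorphism by Theorem \ref{thm:3.3}, $\ker\psi$ is automatically an ideal, so the entire content is the set-theoretic identity $K=\ker\psi$. The only points requiring care are the well-definedness of the coordinate formula for $\psi$ on residue classes, which is exactly what membership in $R_{p}$ provides, and the observation that evaluating on $\mathbf{e}_{j}$ isolates the single entry $a_{ij}$ in each coordinate.
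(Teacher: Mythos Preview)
Your proof is correct. Note, however, that the paper does not actually supply its own proof of this lemma: it is quoted from Hillar and Rhea \cite{key-36}, and the paper simply refers the reader there for details. Your argument---checking $K\subseteq\ker\psi$ by the obvious divisibility and $\ker\psi\subseteq K$ by evaluating on the standard basis vectors $\mathbf{e}_{j}$---is the standard and essentially only reasonable approach, and is what one finds in \cite{key-36} as well.
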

Theorem \ref{thm:3.3} and Lemma \ref{lem:3.4} give that $\text{End}(H_{p})$
is isomorphic to $\nicefrac{R_{p}}{K}$. For more details, the reader
is referred to \cite{key-36}.

The following is a complete description of $\text{Aut}(H_{p})$.
\begin{thm}
(\cite{key-36}, Theorem 3.6) An endomorphism $M=\psi(A)$ is an automorphism
if and only if $A(\text{mod }p)\in GL_{n}(\mathbb{F}_{p})$.
\end{thm}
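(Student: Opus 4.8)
The plan is to exploit that $H_p$ is a finite group, so that the endomorphism $M=\psi(A)$ is an automorphism if and only if it is surjective, and then to detect surjectivity on the Frattini quotient $\nicefrac{H_p}{pH_p}$, where the question collapses to linear algebra over $\mathbb{F}_p$. The reduction-mod-$p$ map $R_p\to M_n(\mathbb{F}_p)$, $A\mapsto A\bmod p$, is a ring homomorphism, and the whole point is to see that it computes the action of $M$ on the Frattini quotient.

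First I would record that the subgroup $pH_p$ is precisely the Frattini subgroup $\Phi(H_p)$ of the abelian $p$-group $H_p$, so that the quotient is elementary abelian of rank $n$, i.e. $\nicefrac{H_p}{pH_p}\cong\mathbb{F}_p^{n}$. Since $M(px)=pM(x)\in pH_p$, every endomorphism $M$ preserves $pH_p$ and hence induces an $\mathbb{F}_p$-linear map $\overline{M}$ on $\nicefrac{H_p}{pH_p}$. The key computation is to identify $\overline{M}$ explicitly. Writing an element of $H_p$ as a column vector $(\alpha_1,\dots,\alpha_n)^{T}$, lifting to $\mathbb{Z}^{n}$, applying $A$ over $\mathbb{Z}$ and reducing the $i$-th coordinate modulo $p^{e_i}$ recovers $\psi(A)$ by Theorem \ref{thm:3.3}; reducing one further step modulo $p$ shows that the $i$-th coordinate of $\overline{M}$ equals $\sum_{j}(a_{ij}\bmod p)(\alpha_j\bmod p)$. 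Thus $\overline{M}$ is multiplication by $A\bmod p$ on $\mathbb{F}_p^{n}$, and the divisibility constraints defining $R_p$ play no role at this level.

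It then remains to chain the equivalences. Because $H_p$ is finite, $M$ is an automorphism if and only if $M$ is surjective. By the non-generator property of the Frattini subgroup, $M$ is surjective if and only if $M(H_p)+pH_p=H_p$, that is, if and only if $\overline{M}$ is surjective; and since $\overline{M}$ is an endomorphism of the finite-dimensional space $\mathbb{F}_p^{n}$, surjectivity is equivalent to invertibility, i.e. to $A\bmod p\in GL_n(\mathbb{F}_p)$. This argument establishes both directions of the ``if and only if'' simultaneously.

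The one point needing care is the Frattini reduction in the surjectivity step: one must justify that surjectivity of $\overline{M}$ modulo $pH_p$ forces genuine surjectivity of $M$. This is exactly the statement that the elements of $\Phi(H_p)=pH_p$ are non-generators of $H_p$, so that $M(H_p)+pH_p=H_p$ already yields $M(H_p)=H_p$. The remaining implications (finiteness giving automorphism $=$ surjection, and surjectivity computing correctly on the quotient) are immediate, so I expect the Frattini lifting to be the only genuinely substantive ingredient beyond the elementary linear algebra over $\mathbb{F}_p$.
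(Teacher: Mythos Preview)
Your argument is correct: the identification of $pH_p$ with the Frattini subgroup, the computation that the induced map on $H_p/pH_p\cong\mathbb{F}_p^{\,n}$ is multiplication by $A\bmod p$, and the Frattini lifting step are all sound, and together they give both implications at once. Note, however, that the paper does not supply its own proof of this theorem; it simply quotes the statement from Hillar and Rhea \cite{key-36} and moves on, so there is no in-paper argument to compare against. For what it is worth, the proof in \cite{key-36} proceeds more concretely---showing directly that a non-invertible $A\bmod p$ yields a nontrivial kernel element in $H_p$, and conversely producing an explicit inverse matrix in $R_p$ when $A\bmod p$ is invertible---whereas your Frattini/Nakayama approach is shorter and more conceptual, trading the explicit inverse for the non-generator property.
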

Hillar and Rhea illustrate how to calculate $|\text{Aut}(H_{p})|$,
which is presented in the theorem below. First, the following numbers
are defined:

\[
d_{k}=\max\{m:e_{m}=e_{k}\},\ c_{k}=\min\{m:e_{m}=e_{k}\}.
\]
Since $e_{m}=e_{k}$ for $m=k$, we have the two inequalities $d_{k}\ge k$
and $c_{k}\le k$. 

Note that 
\[
c_{1}=c_{2}=\ldots=c_{d_{1}},
\]
and 
\[
c_{d_{1}+1}=\ldots=c_{d_{d_{1}+1}},
\]
etc. So we have
\[
c_{1}=\ldots=c_{d_{1}}<c_{d_{1}+1}=\ldots=c_{d_{d_{1}+1}}<c_{d_{d_{1}+1}+1}=\ldots.
\]
We introduce the numbers $e_{i}',C_{i},D_{i}$ as follows. Define
the set of distinct numbers $\{e_{i}'\}$ such that 
\[
\{e_{i}'\}=\{e_{j}\}\text{ and }e_{1}'<e_{2}'<\ldots.
\]
Let $l\in\mathbb{N}$ be the size of $\{e_{i}'\}$. So $e_{1}'=e_{1}$,
$e_{2}'=e_{d_{1}+1}$, $\ldots$ , $e_{l}'=e_{n}$. 

Now define

\[
D_{i}=\max\{m:e_{m}=e_{i}'\}\text{ for }1\le i\le l
\]
and 
\[
C_{i}=\min\{m:e_{m}=e_{i}'\}\text{ for }1\le i\le l.
\]
Note that $C_{1}=1$ and $D_{l}=n$. For convenience, we also define
$C_{l+1}=n+1$.
\begin{thm}
\label{thm:abelian formula}(\cite{key-36}, Theorem 4.1) The abelian
group $H_{p}=\mathbb{Z}/p^{e_{1}}\mathbb{Z}\times\ldots\times\mathbb{Z}/p^{e_{n}}\mathbb{Z}$
has
\[
|\text{Aut}(H_{p})|=\prod_{k=1}^{n}(p^{d_{k}}-p^{k-1})\prod_{j=1}^{n}(p^{e_{j}})^{n-d_{j}}\prod_{i=1}^{n}(p^{e_{i}-1})^{n-c_{i}+1}.
\]
\end{thm}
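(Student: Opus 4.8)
The plan is to count automorphisms directly inside the model $\text{End}(H_{p})\cong\nicefrac{R_{p}}{K}$ supplied by Theorem \ref{thm:3.3} and Lemma \ref{lem:3.4}, using the invertibility criterion of \cite{key-36}: a class $A\in\nicefrac{R_{p}}{K}$ represents an automorphism precisely when its reduction $\overline{A}:=A\bmod p$ lies in $GL_{n}(\mathbb{F}_{p})$. Consider the reduction map $\rho\colon\nicefrac{R_{p}}{K}\to M_{n}(\mathbb{F}_{p})$, $A\mapsto\overline{A}$. It is a homomorphism of additive groups, so every non-empty fibre has size $|\ker\rho|$, and the set of automorphisms is the union of the fibres over the invertible elements of $\operatorname{im}\rho$. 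Hence
\[
|\text{Aut}(H_{p})|=\bigl|\{\overline{A}\in\operatorname{im}\rho:\overline{A}\in GL_{n}(\mathbb{F}_{p})\}\bigr|\cdot|\ker\rho|.
\]
The whole computation therefore splits into (i) describing $\operatorname{im}\rho$ and counting its invertible elements, and (ii) counting $\ker\rho$; the remaining work is to reorganize the two counts into the three advertised products.

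First I would pin down $\operatorname{im}\rho$. Group the indices $1,\ldots,n$ into the blocks $B_{1},\ldots,B_{l}$ of constant $e$-value, where $B_{s}$ runs over the indices $C_{s},\ldots,D_{s}$ and has size $r_{s}=D_{s}-C_{s}+1$. For a position $(i,j)$ the defining condition $p^{e_{i}-e_{j}}\mid a_{ij}$ of $R_{p}$ (for $j\le i$) forces $\overline{a}_{ij}=0$ exactly when $i$ lies in a strictly higher block than $j$ (so that $e_{i}>e_{j}$), and imposes nothing on $\overline{a}_{ij}$ otherwise. Thus $\operatorname{im}\rho$ is exactly the set of block upper triangular matrices for this partition: arbitrary diagonal and upper-right blocks, zero lower-left blocks. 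Over the field $\mathbb{F}_{p}$ such a matrix is invertible iff every diagonal block is, so the first factor is
\[
\bigl|\{\overline{A}\in\operatorname{im}\rho:\overline{A}\in GL_{n}(\mathbb{F}_{p})\}\bigr|=\prod_{s=1}^{l}|GL_{r_{s}}(\mathbb{F}_{p})|\cdot\prod_{1\le s<t\le l}p^{\,r_{s}r_{t}},
\]
the second product accounting for the freely chosen upper-right blocks.

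Next I would compute $|\ker\rho|$ position by position, counting the representatives in $\nicefrac{R_{p}}{K}$ that are $\equiv0\bmod p$. A diagonal or upper-right position $(i,j)$ (where $\min(e_{i},e_{j})=e_{i}$) admits $p^{e_{i}-1}$ such representatives, whereas a lower-left position $(i,j)$ (where $\min(e_{i},e_{j})=e_{j}$ and the $R_{p}$-condition already forces divisibility by $p$) contributes all $p^{e_{j}}$ of its representatives. Collecting the lower-left positions column by column and the diagonal/upper-right positions row by row gives
\[
|\ker\rho|=\prod_{j=1}^{n}(p^{e_{j}})^{\,n-d_{j}}\cdot\prod_{i=1}^{n}(p^{e_{i}-1})^{\,n-c_{i}+1},
\]
since for fixed $j$ the lower-left positions in column $j$ are the rows lying in blocks above that of $j$, numbering $n-d_{j}$, and for fixed $i$ the diagonal/upper-right positions in row $i$ are the columns $j\ge c_{i}$, numbering $n-c_{i}+1$. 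These are already the second and third products in the statement.

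It then remains to match the invertible-image factor with $\prod_{k=1}^{n}(p^{d_{k}}-p^{k-1})$, and this is the step I expect to be the fiddly one. As $k$ ranges over a single block $B_{s}$ one has $d_{k}=D_{s}$ constant while $k-1$ runs through $C_{s}-1,\ldots,D_{s}-1$, so I would verify the identity
\[
\prod_{k=C_{s}}^{D_{s}}(p^{D_{s}}-p^{k-1})=p^{(C_{s}-1)r_{s}}\,|GL_{r_{s}}(\mathbb{F}_{p})|
\]
and combine it with the bookkeeping $\sum_{s}(C_{s}-1)r_{s}=\sum_{s<t}r_{s}r_{t}$, which holds because $C_{s}-1$ counts precisely the indices in the lower blocks. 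Together these absorb the $\prod_{s<t}p^{r_{s}r_{t}}$ term into $\prod_{k}(p^{d_{k}}-p^{k-1})$, and the three factors then assemble into the asserted formula, completing the proof.
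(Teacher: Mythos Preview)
Your proposal is correct and follows essentially the same two-step strategy as the paper's (sketched) proof: first count the matrices in $R_{p}$ whose reduction mod $p$ is invertible (the paper phrases this as ``upper block triangular'' matrices with linearly independent columns, you as block-upper-triangular with invertible diagonal blocks), then count the lifts of each such reduction back to $R_{p}/K$ (the paper calls these ``extensions'', you package them as $|\ker\rho|$), obtaining the same entrywise counts $p^{e_{j}}$ for $e_{i}>e_{j}$ and $p^{e_{i}-1}$ for $e_{i}\le e_{j}$. The only cosmetic difference is that the paper asserts the first factor equals $\prod_{k}(p^{d_{k}}-p^{k-1})$ directly from the linear-independence count, whereas you pass through the block form $\prod_{s}|GL_{r_{s}}(\mathbb{F}_{p})|\cdot\prod_{s<t}p^{r_{s}r_{t}}$ and then verify the matching identity; both are fine.
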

\begin{proof}
Their calculation involves finding all elements of $R_{p}$ that are
invertible modulo $p$, and computing the distinct ways of extending
such elements to automorphisms of the group. 

So, we need to count all matrices $M\in R_{p}$ that are invertible
modulo $p$. These $M$ are {}``upper block triangular'' matrices
which may be expressed in the following three forms.
\[
M=\left[\begin{array}{cccccccccc}
m_{11} &  &  &  &  &  &  &  &  & *\\
\vdots\\
m_{D_{1}1} & \cdots & m_{D_{1}D_{1}}\\
 &  &  & m_{C_{2}C_{2}}\\
 &  &  & \vdots\\
 &  &  & m_{D_{2}C_{2}} & \cdots & m_{D_{2}D_{2}}\\
 &  &  &  &  &  & \ddots\\
 &  &  &  &  &  &  & m_{C_{l}C_{l}}\\
 &  &  &  &  &  &  & \vdots\\
0 &  &  &  &  &  &  & m_{D_{l}C_{l}} & \cdots & m_{D_{l}D_{l}}
\end{array}\right]
\]
or
\[
M=\left[\begin{array}{cccc}
m_{11} & m_{12} & \cdots & m_{1n}\\
\vdots\\
m_{d_{1}1}\\
 & m_{d_{2}2}\\
 &  & \ddots\\
0 &  &  & m_{d_{n}n}
\end{array}\right]=\left[\begin{array}{cccccc}
m_{1c_{1}} &  &  &  &  & *\\
 & m_{2c_{2}}\\
 &  & \ddots\\
0 &  &  & m_{nc_{n}} & \cdots & m_{nn}
\end{array}\right].
\]
The number of such $M$ is
\[
\prod_{k=1}^{n}(p^{d_{k}}-p^{k-1}),
\]
since we require linearly independent columns. 

So the first step to calculating $|\text{Aut}(H_{p})|$ is done. The
second half of the computation is to count the number of extensions
of $M$ to $\text{Aut}(H_{p})$. To extend each entry $m_{ij}$ from
$m_{ij}\in\nicefrac{\mathbb{Z}}{p\mathbb{Z}}$ to $a_{ij}\in\nicefrac{p^{e_{i}-e_{j}}\mathbb{Z}}{p^{e_{i}}\mathbb{Z}}$
(if $e_{i}>e_{j}$), or $a_{ij}\in\nicefrac{\mathbb{Z}}{p^{e_{i}}\mathbb{Z}}$
(if $e_{i}\le e_{j}$), such that
\[
a_{ij}\equiv m_{ij}\ (\text{mod }p),
\]
we have $p^{e_{j}}$ ways to do so for the necessary zeros (that is,
when $e_{i}>e_{j}$), as any element of $\nicefrac{p^{e_{i}-e_{j}}\mathbb{Z}}{p^{e_{i}}\mathbb{Z}}$
works. Similarly, there are $p^{e_{i}-1}$ ways for the not necessarily
zero entries (that is, when $e_{i}\le e_{j})$, as any element of
$\nicefrac{p\mathbb{Z}}{p^{e_{i}}\mathbb{Z}}$ will do.
\end{proof}
$\ $

\emph{We apply Hillar and Rhea's method to $M=I_{n\times n}$}. We
consider all extensions of $I_{n\times n}$ to $\text{Aut}(Z)$. Using
Lemma \ref{Lemma 2}, we identify which of these elements of $\text{Aut}(Z)$
can be extended to $\text{Aut}^{\nicefrac{G}{Z}}(G)$.

To this end, we prove the following.
\begin{prop}
\label{pro:(a),(b),(c)}Let $G$ be a finite non-abelian $p^{2}$-abelian
$p$-central $p$-group. Suppose $Z=Z(G)\cong\nicefrac{\mathbb{Z}}{p^{e_{1}}\mathbb{Z}}\times\nicefrac{\mathbb{Z}}{p^{e_{2}}\mathbb{Z}}\times\ldots\times\nicefrac{\mathbb{Z}}{p^{e_{n}}\mathbb{Z}}$
where $2\le e_{1}\le e_{2}\le\ldots\le e_{n}$ and $n\in\mathbb{N}$.
Let $\theta\in\text{Aut}(Z)$ be such that:

(a) $\theta$ is represented as a matrix $A\in R_{p}$;

(b) $A(\text{mod }p)\equiv I_{n\times n}$;

(c) $a_{ij}\equiv0$ mod $p^{e_{i}-e_{j}+2}$ for $i\ne j$ with $e_{i}\ge e_{j}$,
and $a_{ii}\equiv1$ mod $p^{2}$. 

Then $\theta$ can be extended to $\widetilde{\theta}\in\text{Aut}^{\nicefrac{G}{Z}}(G)$.\end{prop}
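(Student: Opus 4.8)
The plan is to build the extension as a central automorphism and to obtain it through Lemma~\ref{Lemma 2}. Taking $N=Z$, $Q=\nicefrac{G}{Z}$ and $\phi=1$ as prescribed, I must produce $\chi\in Z^{Q}$ with $\chi(1)=1$ fulfilling (\ref{eq:Star}). Writing $Z$ additively and $\Delta:=A-I\in\text{End}(Z)$, condition (\ref{eq:Star}) becomes $\chi(xy)-\chi(x)-\chi(y)=-\Delta\mu(x,y)$; equivalently, the homomorphism $\Delta\colon Z\to Z$ must extend to a homomorphism $\zeta\colon G\to Z$, after which $\chi(x)=\zeta(t(x))$ solves (\ref{eq:Star}) and $\gamma(g)=g\,\zeta(g)$ is the desired map. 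Granting such a $\zeta$, the map $\gamma$ is a homomorphism since $\zeta(G)\le Z$ is central, it induces the identity on $\nicefrac{G}{Z}$, and it restricts to $\theta=\text{id}+\Delta$ on $Z$; as $\gamma(g)=1$ forces $g\in Z$ and then $\theta(g)=1$, $\gamma$ is injective, hence $\gamma\in\text{Aut}^{\nicefrac{G}{Z}}(G)$ is the sought $\widetilde{\theta}$. So everything reduces to constructing the homomorphism $\zeta$.

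Because $\zeta$ lands in the abelian group $Z$ it must kill $G'=[G,G]$, so $\Delta(Z\cap G')=0$ is forced, and arranging this is the crux. I would exploit $p^{2}$-abelianness: the map $g\mapsto g^{p^{2}}$ is then a homomorphism $G\to Z$ (its image lies in $Z$ since $G$ is $p$-central), so it annihilates $G'$ and $G'$ has exponent dividing $p^{2}$. Hence $Z\cap G'\le Z[p^{2}]:=\{z\in Z:p^{2}z=0\}$, and it suffices to verify $\Delta(Z[p^{2}])=0$. This is a coordinatewise check: the $j$-th factor of $Z[p^{2}]$ is generated by $p^{e_{j}-2}$ (using $e_{j}\ge2$), and for each target coordinate $i$ the quantity $\Delta_{ij}\,p^{e_{j}-2}$ is $\equiv0\pmod{p^{e_{i}}}$ --- directly from (c) in the diagonal and lower-triangular cases, and in the upper-triangular case ($e_{i}<e_{j}$), where (c) says nothing, because divisibility of $\Delta_{ij}$ by $p$ from (b) together with $e_{j}-1\ge e_{i}$ already forces the vanishing.

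With $\Delta$ now descending to $\overline{\Delta}\colon\overline{Z}\to Z$, where $\overline{Z}:=\nicefrac{ZG'}{G'}\le G^{\mathrm{ab}}:=\nicefrac{G}{G'}$, it remains to extend $\overline{\Delta}$ to a homomorphism $G^{\mathrm{ab}}\to Z$. This is where $p$-centrality pays off: $G^{p}\le Z$ gives $p\,G^{\mathrm{ab}}\le\overline{Z}$, so $\nicefrac{G^{\mathrm{ab}}}{\overline{Z}}$ is elementary abelian. Picking lifts $u_{1},\dots,u_{s}\in G^{\mathrm{ab}}$ of a basis of a complement to $\nicefrac{\overline{Z}}{p\,G^{\mathrm{ab}}}$, the sole obstruction to extending is to solve $p\,\zeta(u_{k})=\overline{\Delta}(p\,u_{k})$ for each $k$; since every entry of $\Delta$ is divisible by $p$ by (b), the right-hand sides lie in $pZ$, and each $e_{i}\ge2$ makes these equations solvable in $Z$. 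This yields $\zeta$, and with it $\widetilde{\theta}=\gamma$.

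I expect the genuine obstacle to be the descent in the second paragraph --- proving that $\theta$ fixes $Z\cap G'$ pointwise. This is precisely where the hypotheses must cooperate: $p^{2}$-abelianness is what pins $Z\cap G'$ inside the $p^{2}$-torsion $Z[p^{2}]$, and only against this backdrop do the finely tuned congruences of (c) (sharp on the lower-triangular entries, and salvaged on the upper-triangular ones by the inequality $e_{j}>e_{i}$) deliver $\Delta(Z[p^{2}])=0$. By contrast, the extension over the elementary abelian quotient $\nicefrac{G^{\mathrm{ab}}}{\overline{Z}}$ should be routine once $p$-centrality furnishes $p\,G^{\mathrm{ab}}\le\overline{Z}$ and the $p$-divisibility of the entries (with $e_{i}\ge2$) dissolves the extension obstruction.
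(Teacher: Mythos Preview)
Your argument is correct. Both you and the paper invoke Lemma~\ref{Lemma 2} with $N=Z$, $Q=G/Z$, $\phi=1$, but you then diverge in how you produce $\chi$. The paper writes down $\chi$ explicitly: using $p$-centrality to express $t(x)^{p}=z_{1}^{\beta_{1}}\cdots z_{n}^{\beta_{n}}$ in coordinates, it sets $\chi(x)$ to be, in effect, the result of applying the integer matrix $(A-I)/p$ to $(\beta_{1},\dots,\beta_{n})$, and then verifies equation~(\ref{eq:Star}) by a direct coordinate computation, the key input being the identity $\mu(x,y)^{p^{2}}=t(x)^{p^{2}}t(y)^{p^{2}}t(xy)^{-p^{2}}$ from $p^{2}$-abelianness. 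Your approach reformulates the existence of $\chi$ as the extension of $\Delta=A-I\colon Z\to Z$ to a homomorphism $\zeta\colon G\to Z$, then isolates the two obstructions (descent to $G^{\mathrm{ab}}$, which needs $\Delta(Z\cap G')=0$; extension across the elementary abelian quotient $G^{\mathrm{ab}}/\overline{Z}$, which needs $\Delta(Z)\subseteq pZ$) and discharges them using exactly the hypotheses. The two constructions agree --- your $\zeta(t(x))$ is a $p$-th root in $Z$ of $\Delta(t(x)^{p})$, which is precisely the paper's formula --- but your packaging is more structural: it makes transparent \emph{why} conditions (b) and (c) are the right ones (they are exactly what forces $\Delta(Z[p^{2}])=0$ and $\Delta(Z)\subseteq pZ$), whereas the paper's explicit computation has the virtue of exhibiting $\chi$ concretely. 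One minor remark: the hypothesis $e_{i}\ge 2$ is not actually needed for solving $p\,\zeta(u_{k})=\overline{\Delta}(pu_{k})$ once you know the right-hand side lies in $pZ$; it is already used earlier, to ensure $Z[p^{2}]$ has the coordinate description you rely on.
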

\begin{proof}
By Lemma \ref{Lemma 2}, we know that $\theta\in\text{Aut}(Z)$ can
be extended to $\text{Aut}(G)$ if there exist $\phi$ and $\chi$
such that condition $(2)$ of the lemma holds. Our strategy is to
take $\phi=1$ and to construct a suitable $\chi$. 

Recall equation (\ref{eq:Star}):

\[
\mu(x,y)\theta(\mu(x,y)^{-1})=\chi(x)^{-1}\chi(y)^{-1}\chi(xy).
\]
We aim to construct $\chi$ such that equation (\ref{eq:Star}) is
satisfied. 

We express $Z$ as 
\[
\langle z_{1}\rangle\times\langle z_{2}\rangle\times\ldots\times\langle z_{n}\rangle\cong\nicefrac{\mathbb{Z}}{p^{e_{1}}\mathbb{Z}}\times\nicefrac{\mathbb{Z}}{p^{e_{2}}\mathbb{Z}}\times\ldots\times\nicefrac{\mathbb{Z}}{p^{e_{n}}\mathbb{Z}}
\]
where $\{z_{1},\ldots,z_{n}\}$ generates $Z$.

Before we prove that a general $\theta\in\text{Aut}(Z)$ which satisfies
(a) to (c) can be extended to $\widetilde{\theta}$ in $\text{Aut}(G)$,
we illustrate our method by considering the following automorphism
$\theta_{0}$ (in its matrix representation): 

\[
\varphi(\theta_{0})=A_{0}=\left(\begin{array}{cccc}
1+p^{2} &  &  & 0\\
 & 1+p^{2}\\
 &  & \ddots\\
0 &  &  & 1+p^{2}
\end{array}\right).
\]
The automorphism $\theta_{0}$ clearly satisfies our conditions (a)
to (c).

Writing $\mu(x,y)\in Z$ as $z_{1}^{\alpha_{1}}z_{2}^{\alpha_{2}}\ldots z_{n}^{\alpha_{n}}$
for $\alpha_{i}\in\nicefrac{\mathbb{Z}}{p^{e_{i}}\mathbb{Z}}$, we
have that $\theta_{0}(\mu(x,y))$ is given by
\[
A_{0}\cdot\left(\begin{array}{c}
\alpha_{1}\\
\alpha_{2}\\
\vdots\\
\alpha_{n}
\end{array}\right)=\left(\begin{array}{cccc}
1+p^{2} &  &  & 0\\
 & 1+p^{2}\\
 &  & \ddots\\
0 &  &  & 1+p^{2}
\end{array}\right)\cdot\left(\begin{array}{c}
\alpha_{1}\\
\alpha_{2}\\
\vdots\\
\alpha_{n}
\end{array}\right)
\]
\[
=\left(\begin{array}{c}
(1+p^{2})\alpha_{1}\\
(1+p^{2})\alpha_{2}\\
\vdots\\
(1+p^{2})\alpha_{n}
\end{array}\right),
\]
which translates to 
\[
z_{1}^{(1+p^{2})\alpha_{1}}\ldots z_{n}^{(1+p^{2})\alpha_{n}}.
\]
The left-hand side of (\ref{eq:Star}) is then
\[
(z_{1}^{-\alpha_{1}}z_{2}^{-\alpha_{2}}\ldots z_{n}^{-\alpha_{n}})^{p^{2}}=(\mu(x,y)^{-1})^{p^{2}}.
\]

As $\mu(x,y)=t(xy)^{-1}t(x)t(y)$, we see that by the $p^{2}$-abelian
and the $p$-central properties, 
\begin{equation}
\mu(x,y)^{p^{2}}=t(x)^{p^{2}}t(y)^{p^{2}}t(xy)^{-p^{2}}.\label{eq:dagger}
\end{equation}
So setting $\chi_{0}(x)=t(x)^{p^{2}}$ (and $\theta_{0}(z)=z^{1+p^{2}}$)
works as (\ref{eq:Star}) is fulfilled. Note that $\chi_{0}$ is defined
on $Q$ as required. Thus $\theta_{0}$ can be extended to $\widetilde{\theta_{0}}\in\text{Aut}^{\nicefrac{G}{Z}}(G)$. 

We now consider a general element $\theta$ satisfying conditions
(a) to (c). We may express $\theta$ as the matrix $A$ below:

\[
A=\left(\begin{array}{cccc}
1+s_{1}p^{r_{1}} & a_{12} & \ldots & a_{1n}\\
a_{21} & 1+s_{2}p^{r_{2}} &  & \vdots\\
\vdots &  & \ddots & a_{n-1,n}\\
a_{n1} & \ldots & a_{n,n-1} & 1+s_{n}p^{r_{n}}
\end{array}\right)
\]
where 
\[
r_{i}\ge2\text{ and }s_{i}\in\nicefrac{\mathbb{Z}}{p^{e_{i}-r_{i}}\mathbb{Z}},
\]
and for $i\ne j$, 
\[
a_{ij}\equiv\left\{ \begin{array}{cccc}
0 & \text{mod }p &  & \text{if }e_{i}<e_{j}\\
0 & \text{mod }p^{e_{i}-e_{j}+2} &  & \text{if }e_{i}\ge e_{j}
\end{array}\right\} .
\]

Recall that $\mu(x,y)=z_{1}^{\alpha_{1}}z_{2}^{\alpha_{2}}\ldots z_{n}^{\alpha_{n}}$,
and the left-hand side of (\ref{eq:Star}) is $\mu(x,y)\theta(\mu(x,y)^{-1})$.

The left-hand side of (\ref{eq:Star}) is now
\[
(z_{1}^{-\alpha_{1}s_{1}p^{r_{1}}}\ldots z_{n}^{-\alpha_{n}s_{n}p^{r_{n}}})\times\left[z_{1}^{-(a_{12}\alpha_{2}+\ldots+a_{1n}\alpha_{n})}\right]\ldots\left[z_{n}^{-(a_{n1}\alpha_{1}+\ldots+a_{n,n-1}\alpha_{n-1})}\right].
\]

$\ $

Now we set up the preliminaries for constructing $\chi$. We note
that $t(x)^{p}\in Z$, as $G$ is $p$-central. So we may write 
\[
t(x)^{p}=z_{1}^{\beta_{1}}\ldots z_{n}^{\beta_{n}}
\]
 for some $\beta_{i}\in\nicefrac{\mathbb{Z}}{p^{e_{i}}\mathbb{Z}}$.
Similarly we have 
\[
t(y)^{p}=z_{1}^{\gamma_{1}}\ldots z_{n}^{\gamma_{n}}
\]
 for some $\gamma_{i}\in\nicefrac{\mathbb{Z}}{p^{e_{i}}\mathbb{Z}}$,
and 
\[
t(xy)^{-p}=z_{1}^{\delta_{1}}\ldots z_{n}^{\delta_{n}}
\]
 for some $\delta_{i}\in\nicefrac{\mathbb{Z}}{p^{e_{i}}\mathbb{Z}}$.

Again we consider equation (\ref{eq:dagger}). In terms of $z_{1},\ldots,z_{n},$
we have 
\[
z_{1}^{\alpha_{1}p^{2}}\ldots z_{n}^{\alpha_{n}p^{2}}=(z_{1}^{\beta_{1}p}\ldots z_{n}^{\beta_{n}p})(z_{1}^{\gamma_{1}p}\ldots z_{n}^{\gamma_{n}p})(z_{1}^{\delta_{1}p}\ldots z_{n}^{\delta_{n}p})
\]
\[
=z_{1}^{(\beta_{1}+\gamma_{1}+\delta_{1})p}\ldots z_{n}^{(\beta_{n}+\gamma_{n}+\delta_{n})p}.
\]
We note that for each $i=1,\ldots,n$, 
\begin{equation}
\alpha_{i}p^{2}=(\beta_{i}+\gamma_{i}+\delta_{i})p+k_{i}p^{e_{i}}\label{eq:(*&)}
\end{equation}
for some $k_{i}\in\mathbb{Z}$.

We construct $\chi$, which is dependent on $\beta_{i},\gamma_{i},\delta_{i}$,
as the composition of the two maps below:

\[
x\longmapsto t(x)^{p}=z_{1}^{\beta_{1}}\ldots z_{n}^{\beta_{n}},
\]
and

\[
z_{1}^{\beta_{1}}\ldots z_{n}^{\beta_{n}}\longmapsto(z_{1}^{\beta_{1}s_{1}p^{r_{1}-1}}\ldots z_{n}^{\beta_{n}s_{n}p^{r_{n}-1}})\times\left[z_{1}^{(\frac{a_{12}}{p}\beta_{2}+\ldots+\frac{a_{1n}}{p}\beta_{n})}\right]\ldots\left[z_{n}^{(\frac{a_{n1}}{p}\beta_{1}+\ldots+\frac{a_{n,n-1}}{p}\beta_{n-1})}\right].
\]

Note again that $\chi$ is defined on $Q$.

Using (\ref{eq:(*&)}), we check that the right-hand side of (\ref{eq:Star})
matches the previously computed left-hand side.

The right-hand side of (\ref{eq:Star}) is
\[
\chi(x)^{-1}\chi(y)^{-1}\chi(xy)
\]
$\ $
\[
=(z_{1}^{-\beta_{1}s_{1}p^{r_{1}-1}}\ldots z_{n}^{-\beta_{n}s_{n}p^{r_{n}-1}})\times[z_{1}^{-(\frac{a_{12}}{p}\beta_{2}+\ldots+\frac{a_{1n}}{p}\beta_{n})}]\ldots[z_{n}^{-(\frac{a_{n1}}{p}\beta_{1}+\ldots+\frac{a_{n,n-1}}{p}\beta_{n-1})}]\times
\]
\[
(z_{1}^{-\gamma_{1}s_{1}p^{r_{1}-1}}\ldots z_{n}^{-\gamma_{n}s_{n}p^{r_{n}-1}})\times[z_{1}^{-(\frac{a_{12}}{p}\gamma_{2}+\ldots+\frac{a_{1n}}{p}\gamma_{n})}]\ldots[z_{n}^{-(\frac{a_{n1}}{p}\gamma_{1}+\ldots+\frac{a_{n,n-1}}{p}\gamma_{n-1})}]\times
\]
\[
(z_{1}^{-\delta_{1}s_{1}p^{r_{1}-1}}\ldots z_{n}^{-\delta_{n}s_{n}p^{r_{n}-1}})\times[z_{1}^{-(\frac{a_{12}}{p}\delta_{2}+\ldots+\frac{a_{1n}}{p}\delta_{n})}]\ldots[z_{n}^{-(\frac{a_{n1}}{p}\delta_{1}+\ldots+\frac{a_{n,n-1}}{p}\delta_{n-1})}]
\]
$\ $
\[
=(z_{1}^{-(\beta_{1}+\gamma_{1}+\delta_{1})s_{1}p^{r_{1}-1}}\ldots z_{n}^{-(\beta_{n}+\gamma_{n}+\delta_{n})s_{n}p^{r_{n}-1}})\times
\]
\[
[z_{1}^{-(\frac{a_{12}}{p}(\beta_{2}+\gamma_{2}+\delta_{2})+\ldots+\frac{a_{1n}}{p}(\beta_{n}+\gamma_{n}+\delta_{n})}]\ldots[z_{n}^{-(\frac{a_{n1}}{p}(\beta_{1}+\gamma_{1}+\delta_{1})+\ldots+\frac{a_{n,n-1}}{p}(\beta_{n-1}+\gamma_{n-1}+\delta_{n-1})}].
\]
$\ $

Substituting (\ref{eq:(*&)}) $\beta_{i}+\gamma_{i}+\delta_{i}=\alpha_{i}p-k_{i}p^{e_{i}-1}$
into the above gives the following.
\[
(z_{1}^{-(\alpha_{1}p-k_{1}p^{e_{1}-1})s_{1}p^{r_{1}-1}}\ldots z_{n}^{-(\alpha_{n}p-k_{n}p^{e_{n}-1})s_{n}p^{r_{n}-1}})\times
\]
\[
[z_{1}^{-\frac{a_{12}}{p}(\alpha_{2}p-k_{2}p^{e_{2}-1})+\ldots+\frac{a_{1n}}{p}(\alpha_{n}p-k_{n}p^{e_{n}-1})}]\ldots[z_{n}^{-\frac{a_{n1}}{p}(\alpha_{1}p-k_{1}p^{e_{1}-1})+\ldots+\frac{a_{n,n-1}}{p}(\alpha_{n-1}p-k_{n-1}p^{e_{n-1}-1})}].
\]
$\ $

We simplify the above using the following facts:

- $o(z_{i})=p^{e_{i}}$ and $r_{i}\ge2$;

- $a_{ij}p^{e_{j}-2}\equiv0$ mod $p^{e_{i}}$ for $i\ne j$. 

So the right-hand side of (\ref{eq:Star}) is now
\[
(z_{1}^{-\alpha_{1}s_{1}p^{r_{1}}}\ldots z_{n}^{-\alpha_{n}s_{n}p^{r_{n}}})\times\left[z_{1}^{-(a_{12}\alpha_{2}+\ldots+a_{1n}\alpha_{n})}\right]\ldots\left[z_{n}^{-(a_{n1}\alpha_{1}+\ldots+a_{n,n-1}\alpha_{n-1})}\right]
\]
and this matches the left-hand side of (\ref{eq:Star}), as required.
\end{proof}
$\ $

Now, we calculate all such matrices in $R_{p}$ satisfying conditions
(a) to (c) in Proposition \ref{pro:(a),(b),(c)}, as these extend
to distinct elements in $\text{Aut}^{\nicefrac{G}{Z}}(G)$.

For the diagonal entries we have $|\nicefrac{p^{2}\mathbb{Z}}{p^{e_{1}}\mathbb{Z}}|\times\ldots\times|\nicefrac{p^{2}\mathbb{Z}}{p^{e_{n}}\mathbb{Z}}|=\frac{|Z|}{p^{2n}}$
choices. 

When $e_{i}<e_{j}$, we have $p^{e_{i}-1}$ choices as any element
of $\nicefrac{p\mathbb{Z}}{p^{e_{i}}\mathbb{Z}}$ works. When $e_{i}=e_{j}$
and $i\ne j$, we have $p^{e_{i}-2}$ choices as any element of $\nicefrac{p^{2}\mathbb{Z}}{p^{e_{i}}\mathbb{Z}}$
works. For each row $i$, we have $n-C_{i}$ off-diagonal entries
which correspond to $e_{i}\le e_{j}$. Of these $C_{i+1}-C_{i}-1$
correspond to $e_{i}=e_{j}$ and $n-C_{i+1}+1$ correspond to $e_{i}<e_{j}$.
So the number of choices for these entries is
\[
\prod_{i=1}^{l}(p^{e_{i}'-1})^{(n-C_{i+1}+1)(C_{i+1}-C_{i})}\prod_{i=1}^{l}(p^{e_{i}'-2})^{(C_{i+1}-C_{i}-1)(C_{i+1}-C_{i})}.
\]

When $e_{i}>e_{j}$, there are $p^{e_{j}-2}$ choices as any element
of $\nicefrac{p^{e_{i}-e_{j}+2}\mathbb{Z}}{p^{e_{i}}\mathbb{Z}}$
works. For each column $j$, there are $n-D_{j}$ entries corresponding
to $e_{i}>e_{j}$. So the number of choices for these entries is
\[
\prod_{j=1}^{l}(p^{e_{j}'-2})^{(n-D_{j})(C_{j+1}-C_{j})}=\prod_{j=1}^{l}(p^{e_{j}'-2})^{(n-C_{j+1}+1)(C_{j+1}-C_{j})}.
\]

This enables us to prove the following lemma.
\begin{lem}
\textup{\label{lem:OutG} Using the notation from before, for $e_{1}\ge2$,
\[
|\text{Aut}^{\nicefrac{G}{Z}}(G)|_{p}\ge\frac{|Z|}{p^{2n}}\prod_{i=1}^{l}(p^{e_{i}'-1})^{(n-C_{i+1}+1)(C_{i+1}-C_{i})}\prod_{i=1}^{l}(p^{e_{i}'-2})^{(n-C_{i})(C_{i+1}-C_{i})}.
\]
}

Furthermore, the non-trivial automorphisms calculated above are all
\emph{non-inner} automorphisms.\end{lem}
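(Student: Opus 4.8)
The plan is to multiply together the choice-counts already tabulated above and then promote the resulting count of admissible $\theta\in\text{Aut}(Z)$ into a lower bound on the $p$-part of $|\text{Aut}^{\nicefrac{G}{Z}}(G)|$. Since $e_{1}\ge2$ guarantees that every exponent $e_{i}'-1$ and $e_{i}'-2$ is non-negative, each tabulated count is a genuine power of $p$. Collecting the four contributions, the diagonal entries give $\nicefrac{|Z|}{p^{2n}}$, the off-diagonal entries with $e_{i}<e_{j}$ give $\prod_{i=1}^{l}(p^{e_{i}'-1})^{(n-C_{i+1}+1)(C_{i+1}-C_{i})}$, and the two families of entries with $e_{i}=e_{j}$ (for $i\ne j$) and with $e_{i}>e_{j}$ both contribute powers of $p^{e_{i}'-2}$. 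The one computation worth spelling out is the merger of these last two products: for a fixed block index $i$ the exponents add as
\[
(C_{i+1}-C_{i}-1)(C_{i+1}-C_{i})+(n-C_{i+1}+1)(C_{i+1}-C_{i})=(n-C_{i})(C_{i+1}-C_{i}),
\]
using $(C_{i+1}-C_{i}-1)+(n-C_{i+1}+1)=n-C_{i}$. This yields exactly the displayed product as the number $|S|$ of matrices in $R_{p}$ satisfying conditions (a)--(c) of Proposition \ref{pro:(a),(b),(c)}.

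Next I would convert the count $|S|$ into the claimed bound, which is the genuinely delicate point: exhibiting $|S|$ distinct extensions only bounds the full order $|\text{Aut}^{\nicefrac{G}{Z}}(G)|$, whereas we need the $p$-part. I would work with the restriction homomorphism $\rho:\text{Aut}^{\nicefrac{G}{Z}}(G)\to\text{Aut}(Z)$, $\gamma\mapsto\gamma|_{Z}$, which is well defined since $Z$ is characteristic in $G$. By Proposition \ref{pro:(a),(b),(c)} every $\theta\in S$ equals $\rho(\widetilde{\theta})$ for some $\widetilde{\theta}$, so $S$ lies in the image of $\rho$. Moreover, by condition (b) every element of $S$ induces the identity on $\nicefrac{Z}{pZ}$, so $S$ is contained in the congruence subgroup $U=\ker(\text{Aut}(Z)\to GL_{n}(\mathbb{F}_{p}))$, which is a $p$-group (it lies in $1+J$ for $J$ the Jacobson radical of the finite ring $\text{End}(Z)$, hence has $p$-power order). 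Therefore $S$ sits inside $\rho(\text{Aut}^{\nicefrac{G}{Z}}(G))\cap U$, a $p$-subgroup of the quotient $\rho(\text{Aut}^{\nicefrac{G}{Z}}(G))$ of $\text{Aut}^{\nicefrac{G}{Z}}(G)$, and so
\[
|S|\le|\rho(\text{Aut}^{\nicefrac{G}{Z}}(G))\cap U|\le|\rho(\text{Aut}^{\nicefrac{G}{Z}}(G))|_{p}\le|\text{Aut}^{\nicefrac{G}{Z}}(G)|_{p},
\]
which is the asserted inequality. Alternatively one checks directly, by a valuation computation on a product $C=AB$ of two matrices obeying (a)--(c), that these conditions are closed under matrix multiplication, so that $S$ itself is a $p$-subgroup of $\text{Aut}(Z)$; this avoids quoting the congruence-subgroup fact.

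Finally, for the non-inner assertion I would note that every inner automorphism of $G$ fixes $Z$ pointwise, since $Z$ is central. For non-trivial $\theta\in S$ the extension $\widetilde{\theta}$ satisfies $\widetilde{\theta}|_{Z}=\theta\ne\text{id}$, so $\widetilde{\theta}$ acts non-trivially on $Z$ and cannot be inner. I expect the main obstacle to be precisely the step in the second paragraph: the honest content is not the (routine) exponent bookkeeping but the recognition that the counted automorphisms all lie in a single $p$-subgroup sitting inside a quotient of $\text{Aut}^{\nicefrac{G}{Z}}(G)$, so that their number really does bound $|\text{Aut}^{\nicefrac{G}{Z}}(G)|_{p}$ from below rather than merely bounding the full order.
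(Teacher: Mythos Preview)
Your argument is correct and closely parallels the paper's. The tabulation of admissible matrices, the merger of the two $(e_{i}'-2)$-products via $(C_{i+1}-C_{i}-1)+(n-C_{i+1}+1)=n-C_{i}$, and the non-inner observation (inner automorphisms fix $Z$ pointwise) are all exactly as the paper has them.

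For the passage to the $p$-part, the paper takes precisely what you call the ``alternative'' route: it defines $P=\rho^{-1}(S)\subseteq\text{Aut}^{\nicefrac{G}{Z}}(G)$ and verifies, by a three-case valuation computation on $c_{ij}=\sum_{k}a_{ik}b_{kj}$ (splitting according to $i<j$, $i=j$, $i>j$), that conditions (a)--(c) are preserved under matrix multiplication, so that $P$ is a subgroup of $p$-power order. Your primary route through the congruence subgroup $U=\ker\bigl(\text{Aut}(Z)\to GL_{n}(\mathbb{F}_{p})\bigr)$ is a clean shortcut: condition (b) already places $S$ inside the $p$-group $U$, so no case analysis is needed to see that the image side is a $p$-group, and the chain $|S|\le|\rho(\text{Aut}^{\nicefrac{G}{Z}}(G))\cap U|\le|\text{Aut}^{\nicefrac{G}{Z}}(G)|_{p}$ follows immediately. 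The paper does not take this shortcut, but the two arguments are minor variants of one another and reach the same conclusion.
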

\begin{proof}
The number of extensions $\widetilde{\theta}\in\text{Aut}^{\nicefrac{G}{Z}}(G)$
from Proposition \ref{pro:(a),(b),(c)} is
\[
\frac{|Z|}{p^{2n}}\prod_{i=1}^{l}(p^{e_{i}'-1})^{(n-C_{i+1}+1)(C_{i+1}-C_{i})}\prod_{i=1}^{l}(p^{e_{i}'-2})^{(C_{i+1}-C_{i}-1)(C_{i+1}-C_{i})}\prod_{i=1}^{l}(p^{e_{i}'-2})^{(n-C_{i+1}+1)(C_{i+1}-C_{i})},
\]
which simplifies to
\[
\frac{|Z|}{p^{2n}}\prod_{i=1}^{l}(p^{e_{i}'-1})^{(n-C_{i+1}+1)(C_{i+1}-C_{i})}\prod_{i=1}^{l}(p^{e_{i}'-2})^{(n-C_{i})(C_{i+1}-C_{i})}.
\]
Therefore 
\[
|\text{Aut}^{\nicefrac{G}{Z}}(G)|\ge\frac{|Z|}{p^{2n}}\prod_{i=1}^{l}(p^{e_{i}'-1})^{(n-C_{i+1}+1)(C_{i+1}-C_{i})}\prod_{i=1}^{l}(p^{e_{i}'-2})^{(n-C_{i})(C_{i+1}-C_{i})}.
\]
It is clear that all the automorphisms $\widetilde{\theta}$ as in
Proposition \ref{pro:(a),(b),(c)} are non-inner, since $\widetilde{\theta}$
acts non-trivially on $Z$.

It remains to show that these non-inner automorphisms have order a
power of $p$. Denote by $P$ this finite set of non-inner automorphisms;
more precisely,
\[
P=\{\widetilde{\theta}\in\text{Aut}^{\nicefrac{G}{Z}}(G)\ |\ \theta:=\widetilde{\theta}|_{Z}\text{ satisfies (a) to (c) of Proposition \ref{pro:(a),(b),(c)}}\}.
\]
It is sufficient to show that $P$ is a subgroup, as then it follows
that every element of $P$ has $p^{\text{th}}$ power order since
$P$ is a $p$-group. 

To prove that we have a subgroup, we need to show that $P$ is multiplicatively
closed. That is, for $\widetilde{\theta}_{1},\widetilde{\theta}_{2}\in P$,
the composite $\widetilde{\theta}_{1}\cdot\widetilde{\theta}_{2}\in P$.

$\ $

It is enough to consider the restriction to $\text{Aut}(Z)$ since
$P$ is characterized by conditions (a) to (c) on $\text{Aut}(Z)$.
We have $\theta_{1},\theta_{2}\in\text{Aut}(Z)$ such that $\widetilde{\theta}_{1}|_{Z}=\theta_{1}$
and $\widetilde{\theta}_{2}|_{Z}=\theta_{2}$. Working with the matrix
representations, we note that $\theta_{1},\theta_{2}$ satisfy conditions
(a) to (c) of Proposition \ref{pro:(a),(b),(c)}. Let $\varphi(\theta_{1})=(a_{ij})$
and $\varphi(\theta_{2})=(b_{ij})$. Then $\varphi(\theta_{1}\cdot\theta_{2})=\varphi(\theta_{1})\varphi(\theta_{2})=(c_{ij})$
where $c_{ij}=\sum_{k}a_{ik}b_{kj}$. It is immediate that $\varphi(\theta_{1})\cdot\varphi(\theta_{2})\in R_{p}$
since $R_{p}$ is a ring. So (a) is satisfied for $\theta_{1}\cdot\theta_{2}$.
Using the expression for $c_{ij}$, it is clear that (b) is satisfied
for $\theta_{1}\cdot\theta_{2}$. 

For (c), we consider $c_{ij}$ for three cases: (1) $i<j$, (2) $i=j$
and (3) $i>j$.

\uline{Case (1).} We have $i<j$ and so $e_{i}\le e_{j}$. We write
\[
c_{ij}=\sum_{k\le i}a_{ik}b_{kj}+\sum_{i<k<j}a_{ik}b_{kj}+\sum_{k\ge j}a_{ik}b_{kj}.
\]

If $e_{i}<e_{j}$, we need to show that $c_{ij}\equiv0$ mod $p$.
As $p|a_{ij}$ and $p|b_{ij}$ for $i\ne j$, it is straightforward
that $c_{ij}\equiv0$ mod $p$.

If $e_{i}=e_{j}$, we need to show that $c_{ij}\equiv0$ mod $p^{2}$.
Again as $p|a_{ij}$ and $p|b_{ij}$ for $i\ne j$, we have that
\[
c_{ij}\equiv a_{ii}b_{ij}+a_{ij}b_{jj}\text{ mod }p^{2}.
\]
We further have that $p^{2}|a_{ij}$ and $p^{2}|b_{ij}$ since $e_{i}=e_{j}$.
So $c_{ij}\equiv0$ mod $p^{2}$, as required.

\uline{Case (2).} We have $i=j$, and so
\[
c_{ii}=\sum_{k<i}a_{ik}b_{ki}+a_{ii}b_{ii}+\sum_{k>i}a_{ik}b_{ki}.
\]
We need to show that $c_{ii}\equiv1$ mod $p^{2}$. As before, $c_{ii}\equiv a_{ii}b_{ii}$
mod $p^{2}$. Since $a_{ii}\equiv1$ mod $p^{2}$ and $b_{ii}\equiv1$
mod $p^{2}$, we have that $c_{ii}\equiv1$ mod $p^{2}$, as required. 

\uline{Case (3).} Here $i>j$ and hence $e_{i}\ge e_{j}$.We write
\[
c_{ij}=\sum_{k<j}a_{ik}b_{kj}+a_{ij}b_{jj}+\sum_{j<k<i}a_{ik}b_{kj}+a_{ii}b_{ij}+\sum_{k>i}a_{ik}b_{kj}.
\]
For $k<j$, we have $p^{e_{i}-e_{j}+2}$ divides $p^{e_{i}-e_{k}+2}$,
which in turn divides $a_{ik}$. Similarly for $k>i$, we have $p^{e_{i}-e_{j}+2}$
divides $p^{e_{k}-e_{j}+2}$, which divides $b_{kj}$. For $k=j$,
we have $p^{e_{i}-e_{j}+2}$ divides $a_{ij}$. Similarly for $k=i$,
we have $p^{e_{i}-e_{j}+2}$ divides $b_{ij}$. For $j<k<i$, we have
$p^{e_{i}-e_{k}+2}$ divides $a_{ik}$ and $p^{e_{k}-e_{j}+2}$ divides
$b_{kj}$. So $p^{e_{i}-e_{j}+4}$ divides $a_{ik}b_{kj}$. Therefore
$c_{ij}\equiv0$ mod $p^{e_{i}-e_{j}+2}$ as required. 

So (c) is satisfied for $\theta_{1}\cdot\theta_{2}$. Thus $\widetilde{\theta}_{1}\cdot\widetilde{\theta}_{2}\in P$.

Therefore, $P$ is a subgroup as required.
\end{proof}
$\ $

\textbf{PROOF OF THEOREM \ref{thm:Conj}.}

We recall that we need $|\text{Out}(G)|_{p}\ge|Z|$ to prove the theorem,
we now analyse our lower bound for $|\text{Out}(G)|_{p}$, as given
in Lemma \ref{lem:OutG}. As $e_{1}>2$, we have
\[
|\text{Out}(G)|_{p}\ge\frac{|Z|}{p^{2n}}\prod_{i=1}^{l}(p^{2})^{(n-C_{i+1}+1)(C_{i+1}-C_{i})}\prod_{i=1}^{l}p{}^{(n-C_{i})(C_{i+1}-C_{i})}
\]
\[
\qquad\qquad\qquad\qquad\qquad\qquad\quad=\frac{|Z|}{p^{2n}}\prod_{i=1}^{l}p^{(n-C_{i+1}+1)(C_{i+1}-C_{i})}\prod_{i=1}^{l}p^{(n-C_{i+1}+1)(C_{i+1}-C_{i})}\prod_{i=1}^{l}p^{(n-C_{i})(C_{i+1}-C_{i})}
\]
\[
\qquad\qquad\qquad\quad=\frac{|Z|}{p^{2n}}\prod_{i=1}^{l}p^{(n-C_{i+1}+1)(C_{i+1}-C_{i})}\prod_{i=1}^{l}p^{(C_{i+1}-C_{i})[2n+1-(C_{i+1}+C_{i})]}
\]
\[
\ge\frac{|Z|}{p^{2n}}\prod_{i=1}^{l}p^{(C_{i+1}-C_{i})(2n+1)-(C_{i+1}^{2}-C_{i}^{2})}\qquad
\]
\[
=\frac{|Z|}{p^{2n}}p^{(2n+1)(C_{l+1}-C_{1})-(C_{l+1}^{2}-C_{1}^{2})}\qquad\quad
\]
\[
=|Z|p^{n^{2}-3n}.\qquad\qquad\qquad\qquad\qquad\ \ 
\]
As $n\ge3$, we have that $|\text{Out}(G)|_{p}\ge|Z|$ as required.$\qquad\qquad\qquad\qquad\qquad\qquad\qquad\square$

$\ $

\end{document}